\DeclareMathOperator*{\argmax}{argmax}
\DeclareMathOperator*{\argmin}{argmin}
\newcommand{\R}{{\Bbb R}}
\title{Finding maxmin allocations in cooperative and competitive fair division}
\author{Marco Dall'Aglio \\ LUISS University \\Rome, Italy \\ \texttt{mdallaglio@luiss.it} \and
Camilla Di Luca \\
LUISS University \\Rome, Italy \\
\texttt{cdiluca@luiss.it} }
\date{First draft: September 28,2011 \\
This revision: August 10, 2012}
\theoremstyle{plain}
\newtheorem{teo}{Theorem}[section]
\newtheorem{defin}[teo]{Definition}
\newtheorem{prop}[teo]{Proposition}
\theoremstyle{remark}
\begin{document}

\maketitle

\begin{abstract}
We consider upper and lower bounds for maxmin allocations of  a completely divisible good in both competitive and cooperative strategic contexts. These bounds are based on the convexity properties of the range of utility vectors associated to all possible divisions of the good. We then derive a subgradient algorithm to compute the exact value up to any fixed degree of precision.
\end{abstract}

\section{Introduction}
The notion of what is fair in the allocation of one or more infinitely divisible goods to a finite number of agents with their own preferences has long been debated. Predictably, no agreement has been reached on the subject. The situation is often exemplified with children (players) at a birthday party who are around a table waiting for their slice of the cake to be served, with the help of some parent (an impartial referee). If we think about a special class of resolute children who are able to specify their preferences in terms of utility set functions, the parent in charge of the division could ease his task by using a social welfare function to summarize the children's utility values. Among the many proposals, the maxmin -- or Rawlsian -- division was extensively studied in the seminal work of Dubins and Spanier \cite{ds61}, who showed the existence of maxmin optimal partitions of the cake for any completely divisible cake and their main properties. They also showed that when a condition of mutual appreciation holds (assumption {\em (MAC)} below)
 any optimal partition is also {\em equitable}, i.e., it assigns the same level of utility for each child.

The study of the maxmin optimal partitions and their properties has continued in more recent years. In particular, its relationship with other important notions such as efficiency (or Pareto optimality) and, above all, envy-freeness has been investigated with alternating success: each maxmin partition is efficient, but while for the two children case Brams and Taylor \cite{bt96} showed that it is also envy-free, the same may not hold when three or more children are to be served, as shown in Dall'Aglio and Hill \cite{dh03}.

It is worth pointing out the relationship with $n$ player bargaining solutions. If we think about the division as deriving from a bargaining procedure among children, it is straightforward to show that the bargaining solution proposed by Kalai \cite{k77}, in the case where all the players' utilities are normalized to 1, coincides with the equitable maxmin division. Therefore, if the conditions proposed by Dubins and Spanier hold, the two solutions actually coincide.

Little attention has been devoted, however, to finding optimal maxmin partitions with one notable exceptions: the case of two players with additive and linear utility over several goods has been considered by Brams and Taylor \cite{bt96}, with the Adjusted Winner procedure.

For the case of general preferences (expressed as probability measures, i.e. nonnegative and countably additive set functions normalized to 1) and arbitrary number of players, Legut and Wilczinski \cite{lw88} gave a characterization of the optimal maxmin allocation in terms of weighted density functions. Moreover, Elton et al. \cite{ehk86} and Legut \cite{l88} provided lower bounds on the maxmin value.
The optimization problem was later analysed by Dall'Aglio \cite{d02}. The general problem was reformulated as the minimization of a convex function with a particular attention to the case where the maxmin allocation is not equitable and the allocation of the cake occurs in stages to subsets of players. No detail, however, was given on how to proceed with the minimization.

In most of the fair division literature, little is assumed about the strategic behaviour of the children. Brams and Taylor \cite{bt96} discuss the issue of the manipulability of the preferences: in most cases children may benefit from declaring false preferences.
A different approach takes into account the possibility for the children to form coalitions after (Legut \cite{l90} and Legut et al. \cite{lpt94}) or before (Dall'Aglio et al.\ \cite{dbt09}) the division of the cake. In both cases coalitional games are defined and analysed. In the case of early cooperation among children, the game is based on a maxmin allocation problem among coalitions, each one having a joint utility function and a weight. The first properties of the game are studied in \cite{dbt09}. It turns out that the analysis  of the game  is made harder by the difficulties in computing the characteristic function, i.e., the value associated to each coalition. The tools we introduce, therefore, become essential in computing such values, as well as any synthetic value, such as the Shapley value, associated to the game.

The coalitional maxmin problem is indeed a generalization of the classical maxmin problem introduced by Dubins and Spanier. Therefore, we consider a common approach to set up an algorithm which, at each step, will compute an approximating allocation, together with lower and upper bounds for the maxmin value. The algorithm is based on a subgradient method proposed by Shor \cite{s85} and it yields an approximation of the optimal allocation with any fixed degree of precision.

In Section 2 we describe the maxmin fair division problem with coalitions through the strategic model of interaction
among players in \cite{dbt09} and the geometrical setting employed in \cite{{b99}}, \cite{b05} and \cite{d02}.
In Section 3 we present the upper and the lower bounds for the objective value. In Section 4 we fit the Subgradient Method to our problem and we derive a procedure where the optimal value and the optimal partition are computed up to a desired precision and we provide a numerical example where we describe two fair division games and we compute the corresponding Shapley values.  Some final considerations are given in Section 5.

\section{The model and the maxmin fair division problem with coalitions}

We represent our completely divisible good as the set $C,$ a Borel subset of $\mathbb{R}^n$,  and we denote as $\mathcal{B}(C)$ the Borel $\sigma-$algebra of subsets of $C.$
Let $N=\{1, \, \ldots, \, n\}$ be the set of players, whose
preferences on the good are $\mu_1, \, \ldots, \, \mu_n,$ where each
$\mu_i,$ $i \in N,$ is a probability measures on $(C,\mathcal{B}(C)).$
By the Radon-Nikodym theorem, if $v$ is a non-negative  finite-valued measure with respect to which each $\mu_i$ is absolutely continuous (for instance we may consider $v = \sum_{i \in N}{\mu_i}$), then, for each $A \in \mathcal{B}(C),$
\begin{equation*}
\mu_i(A) = \int_{A} f_i dv \quad \forall \; i \in N,
\end{equation*}
where $f_i$ is the Radon-Nikodym derivative of $\mu_i$
with respect to $v.$

We will consider the following assumptions:
\begin{description}
\item{a)} \emph{complete divisibility of the good (CD):} For each $i \in N$ and each $A \in \mathcal{B}(C)$
such that $\mu_i(A)> 0$, there exists a measurable set $B \subset A$ such that $\mu_i(A \cap
B)>0$ and $\mu_i(A \cap B^c)>0.$
\item{b)} \emph{mutual absolutely continuity (MAC):} If
there exists $i \in N$ and $A \in \mathcal{B}(C)$
such that $\mu_i(A) > 0,$ then for each $j \neq i$ $\mu_j(A) >0$.
\item{c)} \emph{relative disagreement (RD):}
  For each pair $i,j \in N$ and each $A \in \mathcal{B}(C)$ such that $\mu_i(A) > 0$ and $\mu_j(A) > 0$, there exists
  a measurable set $B \subset A$ such that
  $\frac{\mu_i(B)}{\mu_i(A)} \ne \frac{\mu_j(B)}{\mu_j(A)}.$
\end{description}

Throughout the rest of the work we will assume that (\emph{CD}) always holds,
while (\emph{MAC}) and (\emph{RD}) are useful, though restrictive, assumptions that we will employ only when strictly needed.

For any $h \in \mathbb{N}$, let $(A_1,\ldots,A_h)$ be an $h$-partition, i.e., a partition of the good $C$ into $h$ measurable sets. Let $\Pi_h$ be the class of all $h$-partitions.
How do players behave in the division procedure? In the simplest case,
each player competes with the others to get a part of the cake with no strategic interaction with other players. Each $(A_1,\ldots,A_n) \in \Pi_n$ determines a division of the good in which player $i$, $i \in N$ gets the share $A_i$ with value $\mu_i(A_i)$. Here, individual players seek an allocation with
values as high as possible. A fair compromise between
the conflicting interests is given by maxmin allocation
$(A^*_1,\ldots,A^*_n) \in \Pi_n$ that achieves

\begin{equation} \label{v_competitive}
v_{m} := \max_{(A_1,\ldots,A_n) \in \Pi_n} \left\{ \min_{i \in N}
\mu_i(A_i)
 \right\}.
\end{equation}
Here $v_m$ denotes the maxmin value in the classical fair division problem.
With a completely divisible good, the allocation $(A_1^*, \,
\ldots, \, A_n^*)$ is fair (or proportional), i.e.\  $\mu_i(A_i^*) \ge \frac{1}{n}$ for all $i \in N.$ Moreover, if  (\emph{MAC}) holds, it is also egalitarian, i.e.\ $\mu_i(A_i^*) = \mu_j(A_j^*) \,$ for all $i, j \in N$ (see \cite{ds61}). Therefore, under this assumption, an optimal allocation is also the bargaining solution proposed by Kalai and Smorodinsky \cite{ks75} (see also Kalai \cite{k77}).

Dall'Aglio et al.\ \cite{dbt09} proposed a strategic
model of interaction, where players, before the
division takes place, gather into mutually disjoint coalitions.
Within each coalition, players pursue an efficient allocation of
their collective share of the cake.

Let $\mathcal{G}$ be the family of
all partitions of $N$ and, for each $\Gamma \in \mathcal{G}$, let
$|\Gamma| = m$, $m \leq n,$ and let $M = \{1,\ldots,m\}$ be the coalitions indexes set.
Thus, players cluster into coalitions specified by the partition
$\Gamma=\{S_1,\ldots,S_m\}.$ For each $j \in M$ and each  coalition $S_j$, players state their joint preferences as follows
\begin{equation}\label{def_mucoal}
\mu_{S_j}(B) = \max_{\{D_i\}_{i \in S_j} \: \mathrm{partition} \: \mathrm{of} \: B} \sum_{i \in S_j} {\mu_i(D_i)} = \int_{B} f_{S_j}(x) d x
\end{equation}
with $f_{S_j}(x) = \max_{i \in S_j} f_i(x)$, $B \in \mathcal{B}(C)$ and $\{D_i\}_{i \in S_j} \in \mathcal{B}(C).$
The utility $\mu_{S_j}(B)$ of coalition $S_j$ will be divided among its members in a way that prevents any of them to break the coalition in search of a better deal.
Once the global coalition structure is known, a fair allocation of the cake among the competing coalitions is sought. In this context, assigning the same value to all coalitions could yield an unfair outcome.
Fairness here must consider the different importance that coalitions may assume and this is taken into account
by a weight function $w: 2^N \to \R_+.$

In this framework, each coalition takes the role of a single player in Equation \eqref{v_competitive}. Following  Kalai \cite{k77}, when coalitions in $\Gamma$ are formed and the weight function $w$ is considered, players should agree on a division of the cake which achieves the following value

\begin{equation}\label{v_maxmin}
v(\Gamma, w) = \max_{(B_1,\ldots,B_m) \in \Pi_m} \left\{ \min_{j \in
M } \frac{\mu_{S_j}(B_j)}{w(S_j)} \right\}.
\end{equation}

Each coalition can evaluate its performance in the division by
considering the following coalitional game
\begin{equation}
  \label{coal_game}
  \eta(S,w)=w(S) v(\Gamma_S,w)  \qquad S \subseteq N
  \end{equation}
where $\Gamma_S=\{S,\{j\}_{j \notin S}\}$. The value $\eta(S,w)$ can be interpreted as the minimal utility that coalition $S$ is going to receive in the division when the system of weight $w$ is enforced, independently of the behaviour of the other players.

A crucial question lies in the definition of the weight system. We consider two proposals:
\begin{itemize}
  \item $w_{\mathrm{card}}=|S|$, $S \subset N$. This is certainly the most intuitive setting. Although very natural, this proposal suffers from a serious drawback, since players participating in the game $\eta(\cdot,w)$ may be better off waiting to seek for cooperation well after the cake has been divided (see \cite{dbt09});
   \item $w_{\mathrm{pre}}=\mu_S\left( \cup_{i \in S} A^*_i \right)$, $S \subset N$, where $(A^*_1,A^*_2,\ldots,A^*_n) $
   is the partition maximizing \eqref{v_competitive}. By seeking early agreements among them, players will be better off than
   postponing such agreements until the cake is cut. The above mentioned problem is overcome at the cost of a less intuitive
   (and more computationally challenging) formulation (see \cite{dbt09}). It is interesting to note that to find these weights we need to solve \eqref{v_competitive}.
\end{itemize}
It is easy to verify that, for each $S \subseteq N$,

$$\eta(S, w_{\mathrm{card}}) \leq \eta(S, w_{\mathrm{pre}})$$
with equality if $S =N$ or $S= \{i\}$ where $i \in N$.

The optimization  problem \eqref{v_maxmin}
can be seen as an infinite dimensional assignment problem. In
principle we could attribute any point of the cake $C$ to any of the
participating players (provided certain measurability assumptions
are met). For very special instances this becomes a linear program:
when the preferences have piecewise constant densities, or when the
cake is made of a finite number of completely divisible and homogeneous parts.

The fully competitive value $v_m$ is a special instance of the cooperative case, since $v_m=v(\Gamma_1,w_1)$, with $\Gamma_1=\{\{1\},\ldots,\{n\}\}$ and $w_1(\{i\})=1$ for each $i \in N$. Therefore, we focus on the cooperative case alone.

\subsection{A geometrical setting}
We now describe a geometrical setting already employed in \cite{b99}, \cite{b05}, \cite{d02} and \cite{l88} to explore fair division problems. In what follows we consider the weighted preferences and
densities, $\mu_{j}^{w}$ and $f^{w}_{j}$, given respectively by
\begin{equation*}
\mu_{j}^{w}= \frac{\mu_{S_j}}{w(S_j)} \qquad f^{w}_{j}=
\frac{f_{S_j}}{w(S_j)}.
\end{equation*}
The partition range, also known as Individual Pieces Set (IPS) (see \cite{b05}) is defined as
$$\mathcal{P} := \{ (\mu_1^w(B_1), \ldots, \mu_m^w(B_m)) : (B_1, \ldots, B_m) \in \Pi_m \} \subset \mathbb{R}^m_+ .$$
Let us consider some of its features. Each point $p \in \mathcal{P}$ is the image, under $(\mu_1,\ldots,\mu_n)$, of an $m$-partition of $C$. Moreover, $\mathcal{P}$ is compact and, if (CD) holds, $\mathcal{P}$
is also convex (see \cite{l40}). Therefore, $v(\Gamma, w) = \text{max }
\{x>0 : (x, x, \ldots, x) \cap \mathcal{P} \neq \emptyset \}.$ So,
the point $v(\Gamma, w)$ is the intersection between the Pareto
frontier of $\mathcal{P}$ and the egalitarian line
\begin{equation}\label{egalitarianline}
\ell = \{x \in \mathbb{R}^m : x_1 = x_2 = \ldots = x_m \}.
\end{equation}

\section{Upper and lower bounds for the maxmin value}

We turn our attention to a simpler optimization problem
that may have an unfair solution,
but it provides easy-to-compute upper and lower bounds for the original problem.
These bounds depend on a weighted maxsum partition, which we can derive through a straightforward extension of a result by Dubins and Spanier \cite{ds61}.
Let $\Delta_{m-1}$ denote the unit $(m-1)-$simplex.

\begin{prop}(see \cite[Theorem 2]{ds61}, \cite[Proposition 4.3]{d02})
Let $\alpha \in \Delta_{m-1}$ and let
$B^{\alpha}=(B^{\alpha}_1 , \ldots, B^{\alpha}_m)$ be an $m-$partition of $C$. If 
\begin{equation}\label{ppartition}
\alpha_k f_k^w (x) \geq \alpha_h f_h^w (x) \quad \text{for all }h,k \in M \text{ and for all } x \in B^{\alpha}_k ,
\end{equation}
then
\begin{equation}
\label{prop_argmaxsum} (B_1^{\alpha},\ldots,B_m^{\alpha}) \in
\argmax_{(B_1,\ldots,B_m) \in \Pi_m} \sum_{j=1}^{m}{\alpha_j
\mu_{j}^{w}(B_j)}.
\end{equation}
\end{prop}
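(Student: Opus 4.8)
The plan is to show that the proposed partition $B^\alpha$ achieves the maximum by a pointwise comparison of densities against any competing partition. Fix an arbitrary $m$-partition $(B_1,\ldots,B_m) \in \Pi_m$. Writing each objective value as an integral over $C$ with respect to the reference measure $v$ (or Lebesgue measure, following the normalization in \eqref{def_mucoal}), we have
\begin{equation*}
\sum_{j=1}^m \alpha_j \mu_j^w(B_j) = \sum_{j=1}^m \int_{B_j} \alpha_j f_j^w(x)\, dx = \int_C \left( \sum_{j=1}^m \alpha_j f_j^w(x)\, \mathbf{1}_{B_j}(x) \right) dx,
\end{equation*}
and similarly for $B^\alpha$ with the indicators $\mathbf{1}_{B_k^\alpha}$. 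Since at each point $x \in C$ the partition assigns $x$ to exactly one block, the integrand for $B^\alpha$ equals $\alpha_{k(x)} f_{k(x)}^w(x)$ where $k(x)$ is the unique index with $x \in B_{k(x)}^\alpha$, and condition \eqref{ppartition} says precisely that $\alpha_{k(x)} f_{k(x)}^w(x) = \max_{h \in M} \alpha_h f_h^w(x)$. Hence the integrand for $B^\alpha$ dominates the integrand for $(B_1,\ldots,B_m)$ at every $x$, and integrating preserves the inequality, giving $\sum_j \alpha_j \mu_j^w(B^\alpha_j) \geq \sum_j \alpha_j \mu_j^w(B_j)$.

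The key steps, in order, are: (i) rewrite both sums as single integrals over $C$ of a density weighted by partition indicators; (ii) observe that for any partition the integrand at $x$ is one of the values $\{\alpha_h f_h^w(x)\}_{h \in M}$, hence at most their maximum; (iii) invoke \eqref{ppartition} to identify the integrand of $B^\alpha$ with that pointwise maximum; (iv) conclude by monotonicity of the integral. A minor preliminary point is to check that $B^\alpha$ is a genuine partition and that all the integrals are well defined and finite — this follows from the $f_i$ being Radon--Nikodym derivatives of probability measures (hence integrable) and $\alpha \in \Delta_{m-1}$ being bounded.

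The only real subtlety — and the step I would be most careful about — is the passage from "the integrand of $B^\alpha$ dominates the competitor's integrand \emph{on $B^\alpha_k$}" to a clean global statement, because condition \eqref{ppartition} is phrased per block: it guarantees $\alpha_k f_k^w(x) \geq \alpha_h f_h^w(x)$ only for $x \in B_k^\alpha$. One must note that as $k$ ranges over $M$ the sets $B_k^\alpha$ cover $C$, so for \emph{every} $x \in C$ there is some $k$ with $x \in B_k^\alpha$ and then $\alpha_k f_k^w(x) \geq \alpha_h f_h^w(x)$ for all $h$; in particular this holds for the index $h$ that the competing partition assigns to $x$. This is what licenses the pointwise domination everywhere. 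Everything else is routine, and the result then follows since the competing partition was arbitrary.
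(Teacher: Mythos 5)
Your proof is correct and is precisely the standard Dubins--Spanier argument that the paper invokes by citation (the paper itself gives no proof, deferring to \cite[Theorem 2]{ds61} and \cite[Proposition 4.3]{d02}): rewrite each weighted sum as $\int_C \sum_j \alpha_j f_j^w \mathbf{1}_{B_j}\,dv$, note that condition \eqref{ppartition} makes the integrand for $B^\alpha$ equal to the pointwise maximum $\max_h \alpha_h f_h^w(x)$, and conclude by monotonicity of the integral. Your attention to the per-block phrasing of \eqref{ppartition} and to the covering of $C$ by the $B_k^\alpha$ is exactly the right point to be careful about, and nothing further is needed.
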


The value of this maxsum problem is itself an upper bound for problem \eqref{v_maxmin}.
For each choice of $\alpha \in \Delta_{m-1}$,
we have a maxsum partition $B^{\alpha}=(B^{\alpha}_1 , \ldots, B^{\alpha}_m)$ corresponding to $\alpha.$

\begin{defin}
The {\em partition value vector (PVV)} $u^{\alpha}=(u_1^{\alpha}, \ldots, u_m^{\alpha})$ is defined by

$$u_j^{\alpha} = \mu_{j}^w (B_j^{\alpha}), \quad \mbox{ for each } j = 1, \ldots, m.$$

\end{defin}

The PVV $u^{\alpha}$ is a point where the hyperplane
$\sum_{j=1}^m{\alpha_j x_j} = k$ touches the partition range
$\mathcal{P},$ so $u^{\alpha}$ lies on the Pareto border of
$\mathcal{P}.$ Moreover, for any $\alpha \in \Delta_{m-1}$ there
exists at least one PVV (see \cite{b05}). We are ready to state the
first approximation result.

\begin{prop}\label{propupper}
Let $g : \Delta_{m-1} \rightarrow \mathbb{R}^+$ be as follows:

\begin{equation*}
    g(\alpha)   :=\int_C{\max_{j \in M}{\{\alpha_j f_j^{w}(x)\}dx}}.
\end{equation*}
Then, $$v(\Gamma, w) \leq g(\alpha) \leq \max_{j \in M}{u_j^{\alpha}}.$$
\end{prop}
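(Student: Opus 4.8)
The plan is to prove the two inequalities separately, both as short consequences of the preceding Proposition (the weighted extension of Dubins and Spanier's theorem) and of the definition of $g$.

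First I would rewrite $g(\alpha)$ as the optimal value of the weighted maxsum problem. For any $m$-partition $(B_1,\ldots,B_m)\in\Pi_m$, the pointwise inequality $\alpha_j f_j^w(x) \le \max_{k\in M}\{\alpha_k f_k^w(x)\}$ on $B_j$, together with the fact that the $B_j$ cover $C$, yields
\[
\sum_{j=1}^m \alpha_j \mu_j^w(B_j) = \sum_{j=1}^m \int_{B_j} \alpha_j f_j^w(x)\,dx \;\le\; \int_C \max_{k\in M}\{\alpha_k f_k^w(x)\}\,dx = g(\alpha).
\]
Conversely, the partition $B^{\alpha}$ of the preceding Proposition satisfies \eqref{ppartition}, so on each piece $B_k^{\alpha}$ the maximum $\max_{k} \alpha_k f_k^w(x)$ is attained at the index $k$; integrating piece by piece gives $g(\alpha) = \sum_{k=1}^m \alpha_k \mu_k^w(B_k^{\alpha}) = \sum_{k=1}^m \alpha_k u_k^{\alpha}$. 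Hence $g(\alpha) = \max_{(B_1,\ldots,B_m)\in\Pi_m}\sum_{j=1}^m \alpha_j\mu_j^w(B_j) = \sum_{k=1}^m \alpha_k u_k^{\alpha}$, which exhibits $g(\alpha)$ simultaneously as the optimal weighted maxsum value and as a convex combination of the components of the PVV $u^{\alpha}$.

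With this identity in hand the right-hand inequality is immediate: since $\alpha\in\Delta_{m-1}$, a convex combination never exceeds its largest term, so $g(\alpha) = \sum_k \alpha_k u_k^{\alpha} \le \max_{j\in M} u_j^{\alpha}$. For the left-hand inequality I would pick a partition $(B_1^*,\ldots,B_m^*)$ that is optimal for \eqref{v_maxmin} — it exists because $\mathcal{P}$ is compact — so that $\mu_j^w(B_j^*)\ge v(\Gamma,w)$ for every $j\in M$; then, using the maxsum characterization of $g(\alpha)$ just obtained,
\[
g(\alpha) \;\ge\; \sum_{j=1}^m \alpha_j \mu_j^w(B_j^*) \;\ge\; v(\Gamma,w)\sum_{j=1}^m \alpha_j \;=\; v(\Gamma,w).
\]

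I do not expect a genuine obstacle in this argument; the only step that needs a little care is the first one, namely recognising that $g(\alpha)$ coincides with the optimal weighted maxsum value. This is exactly where the preceding Proposition and the optimality condition \eqref{ppartition} are used, and it is worth recording the elementary pointwise inequality explicitly, since it makes both directions transparent and also gives, as a by-product, the formula $g(\alpha)=\sum_{k}\alpha_k u_k^{\alpha}$ that will be convenient later.
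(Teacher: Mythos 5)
Your proof is correct, and the right-hand inequality is handled exactly as in the paper: both arguments reduce to the observation that $g(\alpha)=\sum_k\alpha_k u_k^{\alpha}$ is a convex combination of the coordinates of the PVV, hence bounded by $\max_j u_j^{\alpha}$. Where you diverge is the left-hand inequality. The paper argues geometrically: it cites \cite[Proposition 4.3]{d02} for the fact that $\sum_i\alpha_i x_i=g(\alpha)$ is the supporting hyperplane of $\mathcal{P}$ at $u^{\alpha}$, notes that this hyperplane meets the egalitarian line $\ell$ at $(g(\alpha),\ldots,g(\alpha))$, and concludes that this point lies above the maxmin point because the hyperplane lies above $\mathcal{P}$. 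You instead derive the maxsum characterization $g(\alpha)=\max_{(B_1,\ldots,B_m)\in\Pi_m}\sum_j\alpha_j\mu_j^w(B_j)$ directly from the pointwise inequality $\alpha_j f_j^w(x)\le\max_k\alpha_k f_k^w(x)$ together with condition \eqref{ppartition}, and then evaluate the maxsum objective at a maxmin-optimal partition $(B_1^*,\ldots,B_m^*)$ to get $g(\alpha)\ge\sum_j\alpha_j\mu_j^w(B_j^*)\ge v(\Gamma,w)$. The two routes encode the same fact --- the hyperplane lying above $\mathcal{P}$ is precisely the statement $\sum_j\alpha_j x_j\le g(\alpha)$ for all $x\in\mathcal{P}$ --- but yours is more self-contained (no appeal to the external reference or to separation of convex sets) and records explicitly the identity $g(\alpha)=\sum_k\alpha_k u_k^{\alpha}$, which the paper uses only implicitly when it calls $g(\alpha)$ a ``weighted average.'' The only point deserving a remark is the existence of a partition satisfying \eqref{ppartition} and of a maxmin-optimal partition; both are guaranteed (the former by the standard argmax construction, the latter by compactness of $\mathcal{P}$), and the paper relies on them just as tacitly.
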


\begin{proof}
Following \cite[Proposition 4.3]{d02} we know that the  hyperplane that touches $\mathcal{P}$ at the point $u^{\alpha}$ is defined by the equation
$$
\sum_{i \in M} \alpha_i x_i = g(\alpha)
$$
Since $\alpha \in \Delta_{m-1}$, this hyperplane intersects the egalitarian line $\ell$ defined in \eqref{egalitarianline} at the point $(g(\alpha),\ldots,g(\alpha))$. Since the hyperplane is located above $\mathcal{P}$, this point lies above the maxmin point with coordinates $(v(\Gamma,w),\ldots,v(\Gamma,w))$. Therefore
$$
g(\alpha) \geq v(\Gamma,w)
$$

Finally, since $g(\alpha)$ is a weighted average of the values $(u_1^{\alpha}, \ldots, u_m^{\alpha}),$
it follows that $g(\alpha) \leq \max_{j \in M}{u_j^{\alpha}}.$

\qedhere
\end{proof}

The function $g$ was already considered in \cite{d02},
where it was shown that $g$ is convex, and
$v(\Gamma, w) = \min_{\alpha \in \Delta_{m-1}}{g(\alpha)}.$

We now turn our attention to a lower bound for $v(\Gamma, w).$
Although we will see later only one PVV is enough to assure such a
bound, we give a general result for the case where several PVVs have
already been computed. We derive the second approximation result
through a convex combination of these easily computable points in
$\mathcal{P},$ which lie close to $v(\Gamma, w).$ The following result generalizes Theorem 3 in \cite{l88} and Theorem 1.1 in \cite{ehk86}.

\begin{prop}\label{proplower}
Let $u = (u_1, \ldots, u_m)$ a partition value vector such that
\begin{equation} \label{max_coordinate}
u_h = \max_{j=1, \ldots, m} u_j.
\end{equation}

Then,
\begin{equation} \label{lower_bound}
v(\Gamma, w) \geq \underline{v}(u):= \frac{u_h}{1 + \sum_{j \neq h}{\frac{u_h - u_j}{\mu_{j}^{w}(C)}}} \geq \min_{j \in M}{u_j}
.
\end{equation}
\end{prop}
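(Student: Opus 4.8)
The plan is to exhibit, from the given PVV $u$, an explicit $m$-partition whose smallest coordinate equals $\underline{v}(u)$; this certifies the lower bound $v(\Gamma,w)\ge\underline{v}(u)$. The idea, following \cite{ehk86} and \cite{l88}, is to start from the maxsum partition $B^\alpha$ underlying $u$ (with $h$ the index carrying the maximum value $u_h$) and to transfer, for each $j\neq h$, a small fraction of the ``overfull'' piece $B^\alpha_h$ to player $j$ so as to raise every $\mu^w_j$ up to the common level $\underline{v}(u)$. Concretely, I would look for scalars $t_j\in[0,1]$, $j\neq h$, and a measurable decomposition $B^\alpha_h=\bigl(\bigcup_{j\neq h}E_j\bigr)\cup E_h$ such that handing $E_j$ to $j$ gives $u_j+\mu^w_j(E_j)=\underline{v}(u)$ for every $j\neq h$, while $u_h-\sum_{j\neq h}\mu^w_h(E_j)=\underline{v}(u)$ as well. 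The first obstacle is to make this system of equations consistent and solvable; the natural device is to cut each $E_j$ proportionally, i.e.\ to require $\mu^w_j(E_j)/\mu^w_j(B^\alpha_h)$ to be the same fraction for the ``gain'' as the corresponding loss fraction seen by $h$ — this is exactly where the ratios $\mu^w_j(C)$ in the denominator of $\underline{v}(u)$ come from, since $\mu^w_j(B^\alpha_h)\le\mu^w_j(C)$.

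More precisely, I would set $\lambda:=\underline{v}(u)=u_h/\bigl(1+\sum_{j\neq h}(u_h-u_j)/\mu^w_j(C)\bigr)$ and verify algebraically that $\lambda\le u_h$ and $\lambda\ge u_j$ for all $j$ (the latter because $u_h-u_j\ge u_h-u_i$ whenever... — more simply, $\lambda$ is a weighted-type average pulled below $u_h$ but the normalization keeps it above each $u_j$; this is a short inequality manipulation using $u_h=\max_j u_j$). Then I need, for each $j\neq h$, to remove from $B^\alpha_h$ a set on which player $j$ gains exactly $\lambda-u_j\ge 0$ of utility; by (CD) and the intermediate-value property of nonatomic measures (Lyapunov), and because $\mu^w_j(B^\alpha_h)\ge\lambda-u_j$ — which must itself be checked — such a set exists. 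The subtle point is that these removed sets $E_j$ for different $j$ must be pairwise disjoint and jointly a subset of $B^\alpha_h$; the cleanest route is to invoke Lyapunov's convexity theorem for the vector measure $(\mu^w_j)_{j\neq h,\,h}$ restricted to $B^\alpha_h$, which guarantees a simultaneous choice, provided the total ``loss'' to $h$, namely $\sum_{j\neq h}\mu^w_h(E_j)$, does not exceed $u_h-\lambda$. Bounding $\mu^w_h(E_j)$ is where I would use the maxsum property \eqref{ppartition}: since $E_j\subset B^\alpha_h$, on $E_j$ we have $\alpha_h f^w_h(x)\ge\alpha_j f^w_j(x)$, hence $\mu^w_h(E_j)=\int_{E_j}f^w_h\le(\alpha_j/\alpha_h)\int_{E_j}f^w_j$, but more usefully one gets a clean comparison by instead performing the cut ``by ratio'': remove the fraction $\theta_j:=(\lambda-u_j)/\mu^w_j(C)$ of a Lyapunov-uniform splitting, so that simultaneously $\mu^w_h$ restricted there is at most $\theta_j\,\mu^w_h(C)$...

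Let me restate the cleanest version. Fix $h$ with $u_h=\max_j u_j$. By Lyapunov's theorem applied to the nonatomic vector measure $x\mapsto(f^w_1(x),\dots,f^w_m(x))$ on $B^\alpha_h$, for the vector of target fractions $(\theta_j)_{j\neq h}$ with $\theta_j:=(u_h-u_j)/\mu^w_j(C)\in[0,1]$ — note $\sum_{j\neq h}\theta_j<$ the denominator minus $1$, and one checks $\sum\theta_j\le 1$ is \emph{not} needed, only that a disjoint family exists — there is a partition $\{E_j\}_{j\neq h}$ of a subset of $B^\alpha_h$ with $\mu^w_i(E_j)=\theta_j\,\mu^w_i(B^\alpha_h)$ for \emph{every} $i\in M$ and every $j\neq h$. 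Hmm — Lyapunov gives existence of sets realizing any convex-combination value, so one can in fact demand $\mu^w_i(E_j)=\theta_j\mu^w_i(B^\alpha_h)$ simultaneously for all $i$; the disjointness is arranged by peeling off $E_j$ successively from the shrinking leftover, rescaling $\theta_j$ appropriately — this bookkeeping is the main technical nuisance. Granting it, define the new partition $B'$ by $B'_j=B^\alpha_j\cup E_j$ for $j\neq h$ and $B'_h=B^\alpha_h\setminus\bigcup_{j\neq h}E_j$. Then $\mu^w_j(B'_j)=u_j+\theta_j\mu^w_j(B^\alpha_h)\ge u_j+\theta_j\,\mu^w_j(C)\cdot\frac{\mu^w_j(B^\alpha_h)}{\mu^w_j(C)}$ — I want this $\ge\lambda$; choosing the fractions as $\theta_j=(\lambda-u_j)/\mu^w_j(B^\alpha_h)$ instead makes $\mu^w_j(B'_j)=\lambda$ exactly, and then $\mu^w_h(B'_h)=u_h-\sum_{j\neq h}\theta_j\mu^w_h(B^\alpha_h)$, which a direct substitution of $\lambda$'s definition shows equals $\lambda$ as well (this is the one genuine computation, and it is short). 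Hence $\min_j\mu^w_j(B'_j)=\lambda=\underline{v}(u)$, giving $v(\Gamma,w)\ge\underline{v}(u)$. The right-hand inequality $\underline{v}(u)\ge\min_j u_j$ is then pure algebra: writing $\delta_j:=u_h-u_j\ge 0$, it amounts to $u_h\bigl(1+\sum_{j\neq h}\delta_j/\mu^w_j(C)\bigr)^{-1}\ge u_h-\max_j\delta_j$, i.e.\ $\max_j\delta_j\cdot\bigl(1+\sum\delta_j/\mu^w_j(C)\bigr)\ge\sum_{j\neq h}\delta_j$, which holds since each $\delta_j\le\max_k\delta_k$ and there are $m-1$ terms in the sum while the left side already has the factor $\sum_k\delta_k/\mu^w_k(C)$ — a clean termwise comparison finishes it. The main obstacle, as flagged, is the simultaneous disjoint Lyapunov splitting of $B^\alpha_h$ that hits all the prescribed fractions at once; everything else is arithmetic.
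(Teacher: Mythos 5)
Your strategy---realize an explicit partition whose minimum coordinate equals $\underline{v}(u)$---is legitimate in principle, but the construction you actually specify does not produce the value claimed in the proposition, and the one computation you defer as ``short'' is precisely where it fails. If you peel off from $B^\alpha_h$ disjoint sets $E_j$ with $\mu^w_i(E_j)=\theta_j\,\mu^w_i(B^\alpha_h)$ for all $i$ and choose $\theta_j=(\lambda-u_j)/\mu^w_j(B^\alpha_h)$ so that each $j\neq h$ lands exactly on $\lambda$, then
\begin{equation*}
\mu^w_h(B'_h)=u_h\Bigl(1-\sum_{j\neq h}\frac{\lambda-u_j}{\mu^w_j(B^\alpha_h)}\Bigr),
\end{equation*}
and equating this to $\lambda$ determines a level whose denominators involve $\mu^w_j(B^\alpha_h)$, \emph{not} $\mu^w_j(C)$. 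Since $\mu^w_j(B^\alpha_h)\le\mu^w_j(C)-u_j<\mu^w_j(C)$, this level is in general strictly smaller than $\underline{v}(u)$: with $m=2$, $u=(0.6,0.4)$, $\mu^w_2(C)=1$ and $\mu^w_2(B^\alpha_1)=0.3$, your scheme equalizes at $7/15\approx 0.467$, whereas $\underline{v}(u)=0.5$. Worse, the fractions $\theta_j$ are undefined when $\mu^w_j(B^\alpha_h)=0$, which the hypotheses do not exclude (the proposition assumes only (CD), not (MAC)). So your argument, even with the Lyapunov bookkeeping completed, proves a weaker bound than \eqref{lower_bound}. The root cause is that you move mass out of $B^\alpha_h$ only, while the stated bound requires giving each $j\neq h$ a fraction of the \emph{entire} cake.

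The paper's proof does exactly that, but implicitly and without any measure-theoretic surgery: the points $e^q=(0,\ldots,0,\mu^w_q(C),0,\ldots,0)$, $q\neq h$, all belong to $\mathcal{P}$ (give the whole cake to coalition $q$), so by convexity of $\mathcal{P}$ the simplex spanned by $u$ and the $e^q$'s lies inside $\mathcal{P}$; intersecting it with the egalitarian line $\ell$ and solving the resulting linear system by Cramer's rule yields precisely $\underline{v}(u)$---the $\mu^w_j(C)$ denominators come from the $e^q$'s, i.e.\ from fractions of $C$. Your construction can be repaired to match this by realizing that convex combination explicitly: take a Lyapunov-uniform family $\{D_q\}_{q\neq h}$ of disjoint subsets of all of $C$ with $\mu^w_i(D_q)=t_q\,\mu^w_i(C)$ for every $i$, and set $B'_j=(B^\alpha_j\setminus\bigcup_q D_q)\cup D_j$. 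Finally, your closing ``termwise comparison'' for $\underline{v}(u)\ge\min_j u_j$ does not close as written: $\sum_{j\neq h}\delta_j$ can be as large as $(m-1)\max_j\delta_j$, so counting terms is not enough, and this part needs a separate, careful argument.
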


\begin{proof}
Let us consider the following vectors
\begin{equation}\label{e_vector}
e^q= (0, \ldots, 0, \mu_q^{w}(C), 0, \ldots, 0) \qquad q \in M \quad q \neq h,
\end{equation}
where $\mu_q^{w}(C)$ is the weighted joint utility of the whole cake by coalition $S_q.$
Now, consider the convex hull of the PVV $u$ and the $m-1$ points $e^q$, $q \neq h$,
$$V:= \{t_h u + \sum_{q \neq h}{t_q e^q}: (t_h, \ldots, t_m) \in \Delta_{m-1} \}$$
The lower bound we are looking for is the intersection point between $V$ and the egalitarian line $\ell$ from (\ref{egalitarianline}) (see Figure \ref{fig1}). Let us denote this point as $(x_w, \ldots,x_w)$.
Without loss of generality, let us suppose $h = 1.$ Then, we obtain $(x_w, \ldots,x_w)$ as follows:

\begin{displaymath}
\left\{
\begin{array}{l}
t_1 u_1 + 0 + \ldots + 0 = x_w \\
t_1 u_2 + t_2 \mu_{2}^{w}(C) + \ldots + 0 = x_w \\
\vdots \\
t_1 u_m + 0 + \ldots + t_m \mu_{m}^{w}(C) = x_w \\
t_1 + t_2 + \ldots + t_m = 1
\end{array}
\right.
\end{displaymath}

We are dealing with a linear system with $m+1$ unknown quantities, $t_1, t_2, \ldots, t_m, x_w.$

Thus, by Cramer's rule, we get $x_w$ as

\begin{align*}
x_w &= \frac{det
\begin{pmatrix}
u_1 & 0 & \ldots & 0 & 0 \\
u_2 & \mu_{2}^{w}(C) & \ldots & 0 & 0 \\
\vdots & \vdots & \vdots & \vdots & \vdots \\
u_m & 0 & \ldots & \mu_{m}^{w}(C) & 0 \\
1 & 1 & \ldots & 1 & 1
\end{pmatrix}
}{det
\begin{pmatrix}
u_1 & 0 & \ldots & 0 & -1 \\
u_2 & \mu_{2}^{w}(C) & \ldots & 0 & -1 \\
\vdots & \vdots & \vdots & \vdots & \vdots \\
u_m & 0 & \ldots & \mu_{m}^{w}(C) & -1 \\
1 & 1 & \ldots & 1 & 0
\end{pmatrix}
} 
= \frac{u_1 \prod_{q\neq 1}{\mu_{q}^{w}(C)}}{det
\begin{pmatrix}
0 & 1 & 1 & \ldots & 1 \\
-1 & u_1 & 0 & \ldots & 0 \\
- 1 & u_2 & \mu_{2}^{w}(C) & \ldots & 0 \\
\vdots & \vdots & \vdots & \vdots & \vdots \\
-1 & u_m & 0 & \ldots & \mu_{m}^{w}(C)
\end{pmatrix}
} \\
&= \frac{u_1 \prod_{q\neq 1}{\mu_{q}^{w}(C)}}{\prod_{q\neq 1}{\mu_{q}^{w}(C)} + \sum_{q \neq 1}{\prod_{i\neq q,1}{\mu_{i}^{w}(C)} (u_1 - u_q)}} \\
&= \frac{u_1 \prod_{q\neq 1}{\mu_{q}^{w}(C)}}{\prod_{q\neq 1}{\mu_{q}^{w}(C)} \left[ 1 + \sum_{q \neq 1}{\frac{(u_1 - u_{q})}{\mu_{q}^{w}(C)}} \right]} 
= \frac{u_1}{1 + \sum_{q \neq 1}{\frac{(u_1 -u_{q})}{\mu_{q}^{w}(C)}}},
\end{align*}
where the second equality derives by suitable exchanges of rows and columns in the denominator matrix. 
In fact, we get the second one after an even number of exchanges on the first: 
$m$ successive exchanges of the last row until it reach the first position, 
and $m$ successive exchanges of the last column until it reach the first position. 
So the two matrices in the denominator have the same determinant.
It is easy to verify that $t_i > 0$, for every $i \in N$.

Finally, since the lower bound belongs to the convex hull of the PVV $u$ and the $m-1$ vectors $e^q,$ $q \neq h$,
it is not less than the minimum component of each vector,
in particular $\underline{v}(u) \geq \min_{j \in M} {u_j}.$

\qedhere
\end{proof}

An illustration of the position of the bounds with respect to the partition range in the case of two coalitions is shown in Figure \ref{fig1}.
\begin{figure}[htp]
\centering
\includegraphics[scale=0.20]{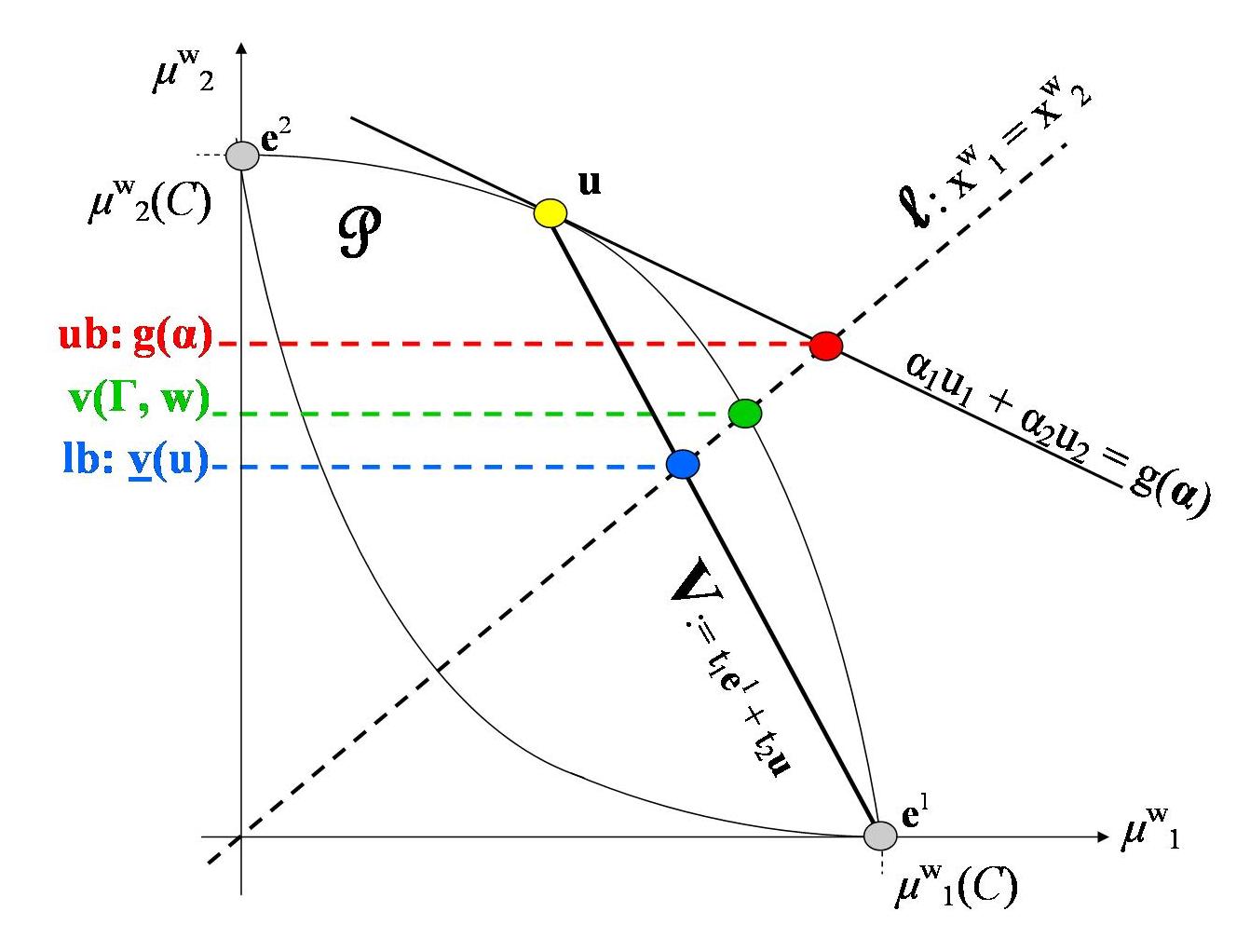}
\caption{Upper and lower bounds for the two-coalition
case.}\label{fig1}
\end{figure}

\section{The subgradient method}

In the previous section we have seen that for each choice of the
coefficients vector $\alpha$ we can derive upper and lower bounds for
$v(\Gamma, w).$ We describe a way of improving the coefficients
$\alpha$ so that eventually the bounds will shrink to the desired value.

Since in general $g$ is a non-differentiable convex function, we can rely on a simple minimizing algorithm developed by Shor \cite{s85}, the subgradient method.
In particular, since the domain of $g$ is constrained, we must consider an extension, the projected subgradient method, which solves constrained convex optimization problems.
Let us start by describing the method through some basic definitions and the essential convergence result.

\begin{defin}
Let $D$ be a closed convex set and let $|| \cdot ||$ be the Euclidean norm. The projection of $z \in \mathbb{R}^n$ on $D$ is denoted by $p(z)$ and it is defined as
\begin{equation} \label{def_projection}
p(z) = \argmin_{x \in D} || x - z ||.
\end{equation}
\end{defin}

\begin{defin} Let $f$ be a convex function with domain $D$ and let $x_0$ an interior point of $D$.
A vector $\gamma(x_0)$ is called a subgradient or a generalized gradient of $f$ at $x_0$ if it satisfies

\begin{equation} \label{subgradient}
f(x) - f(x_0) \geq \langle \gamma(x_0), x - x_0 \rangle \quad \text{for all } x \in D.
\end{equation}

Moreover, $\gamma$ is a bounded subgradient of $f$
if there exists $G \in \mathbb{R}^+$ such that 
$||\gamma(x)|| \leq G$ for all $x \in D.$
\end{defin}

We denote as $\partial_x f(x)$ the set of subgradients of a convex function $f$ at any interior point $x$ of the $f$ domain.

\begin{defin}
A sequence $\{s_t\}_{t=0}^{+ \infty}$ of positive numbers  is called {\em diminishing step size rule} if it satisfies the conditions:
\begin{gather}
\label{dss1}
    \lim_{t \to + \infty} s_t = 0,
\\
\label{dss2}
    \sum_{t=0}^{+ \infty}{s_t} = + \infty.
\end{gather}

\end{defin}

The subgradient method minimizes a non-differentiable convex function which has a bounded set of minimum points and at least one bounded subgradient. 
This procedure returns a minimum value for the function 
moving a point in the domain in the opposite direction of 
a bounded subgradient by a step belonging to a diminishing step size rule (see \cite{s85}). 
If the domain of the function is constrained, then the point is projected in the domain (see \cite{bxm03}).
We recall the general result
\begin{prop}\label{propsgconv} (see  \cite{bxm03}, \cite{s85})
Let $f$ be a convex function defined on $D \subseteq \mathbb{R}^m,$
which has a bounded set of minimum points $D^*$ and let $\gamma(x) \in
\partial_x f(x)$ be a bounded subgradient. 
Moreover, let
$\{s_t\}_{t=0}^{+ \infty}$ be a diminishing step size rule.
Then for any $x^0 \in D$ the sequence $\{x^t\}_{t=0}^{+ \infty}$ generated according to the formula

\begin{equation} \label{projection}
x^{t+1} = p[x^t - s_t \gamma(x^t)]
\end{equation}
has the following property: either an index $t^*$ exists such that $x^{t^*} \in D^*,$
or $\lim_{t \to + \infty} f_{best}^t - f^* = 0,$ where
$$
    f_{best}^t = \min_{i=1, \ldots, t}f(x^t) \qquad \mbox{and} \qquad
    f^* = \min_{x \in D} f(x).
$$
\end{prop}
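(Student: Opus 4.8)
The plan is to follow the classical analysis of the projected subgradient method, controlling the squared Euclidean distance from the iterates to a fixed minimizer. Fix any $x^* \in D^*$ (which is nonempty, being a bounded set of minimum points), and write $f^* = f(x^*) = \min_{x \in D} f(x)$. The starting observation is that the projection $p$ onto the closed convex set $D$ is nonexpansive, so, since $x^* = p(x^*) \in D$,
\begin{equation*}
|| x^{t+1} - x^* ||^2 = || p[x^t - s_t \gamma(x^t)] - x^* ||^2 \le || x^t - s_t \gamma(x^t) - x^* ||^2 .
\end{equation*}
Expanding the right-hand side, invoking the defining subgradient inequality \eqref{subgradient} at $x = x^*$ in the form $\langle \gamma(x^t), x^t - x^* \rangle \ge f(x^t) - f^*$, and using the bound $|| \gamma(x^t) || \le G$, one obtains the one-step recursion
\begin{equation*}
|| x^{t+1} - x^* ||^2 \le || x^t - x^* ||^2 - 2 s_t \big( f(x^t) - f^* \big) + s_t^2 G^2 .
\end{equation*}

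Next I would telescope this inequality from $0$ to $t$. Discarding the nonnegative left-hand side and bounding each $f(x^i) - f^* \ge f_{best}^t - f^* \ge 0$ for $i \le t$ gives
\begin{equation*}
2 \big( f_{best}^t - f^* \big) \sum_{i=0}^{t} s_i \le || x^0 - x^* ||^2 + G^2 \sum_{i=0}^{t} s_i^2 ,
\end{equation*}
whence
\begin{equation*}
0 \le f_{best}^t - f^* \le \frac{|| x^0 - x^* ||^2 + G^2 \sum_{i=0}^{t} s_i^2}{2 \sum_{i=0}^{t} s_i} .
\end{equation*}
If some iterate satisfies $x^{t^*} \in D^*$, then $f_{best}^t = f^*$ for all $t \ge t^*$ and we are in the first case of the dichotomy; otherwise it remains to show that the displayed upper bound tends to $0$ as $t \to \infty$.

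The crux is this limiting argument. The contribution $|| x^0 - x^* ||^2 / \big(2 \sum_{i=0}^{t} s_i\big)$ vanishes immediately by \eqref{dss2}. For the other term I would use the elementary Cesàro-type (Silverman–Toeplitz) fact that if $s_i > 0$, $s_i \to 0$ and $\sum s_i = +\infty$, then the weighted average $\big(\sum_{i=0}^{t} s_i \cdot s_i\big)\big/\big(\sum_{i=0}^{t} s_i\big) \to 0$: given $\varepsilon > 0$, pick $T$ with $s_i < \varepsilon$ for $i > T$, bound the tail by $\varepsilon \sum_{i \le t} s_i$, and let $t \to \infty$ so that the fixed head $\sum_{i \le T} s_i^2$ becomes negligible against the diverging denominator. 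Combining the two estimates yields $\lim_{t \to \infty} (f_{best}^t - f^*) = 0$. I expect this final convergence step — not the recursion itself, which is just a routine expansion built on nonexpansiveness of $p$ and the subgradient inequality \eqref{subgradient} — to be the only delicate point, since the diminishing step size hypotheses \eqref{dss1}–\eqref{dss2} are exactly what the Cesàro argument needs and nothing stronger (such as $\sum s_i^2 < \infty$) is available.
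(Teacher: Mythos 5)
Your argument is correct, and it is precisely the standard analysis from the sources the paper cites for this result (\cite{bxm03}, \cite{s85}); the paper itself only states the proposition and gives no proof. The one-step recursion via nonexpansiveness of the projection combined with the subgradient inequality \eqref{subgradient}, the telescoped bound on $f_{best}^t - f^*$, and the Ces\`aro-type limit using \eqref{dss1}--\eqref{dss2} are exactly how the quoted result is established in those references, so there is nothing to compare beyond noting the match.
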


Let us check that $g$ can be minimized through the projected
subgradient method. First of all, $g$ is convex with
$\min_{\alpha \in \Delta_{m-1}} g(\alpha)= v(\Gamma, w)$ (see \cite{d02}) and we can easily
show that $u^{\alpha} \in \partial_{\alpha} g(\alpha),$ with $u^{\alpha}$ bounded. 
In fact, for each point $\tilde{\alpha} \in \Delta_{m-1}$
the vector $u^{\tilde{\alpha}}$ satisfies \eqref{subgradient}:

\begin{align*}
g(\alpha) - g(\tilde{\alpha}) &= \max_{B \in \Pi_m} \sum_{j \in M}{\alpha_j \mu_j^w(B_j)} - \langle u^{\tilde{\alpha}}, \tilde{\alpha} \rangle \\
&\geq \sum_{j \in M}{\alpha_j \mu_j^w(B_j^{\tilde{\alpha}})} - \langle u^{\tilde{\alpha}}, \tilde{\alpha} \rangle \\
&= \langle u^{\tilde{\alpha}}, \alpha \rangle - \langle u^{\tilde{\alpha}}, \tilde{\alpha} \rangle \\
&= \langle u^{\tilde{\alpha}}, \alpha - \tilde{\alpha} \rangle .
\end{align*}

We now adapt the general updating rule \eqref{projection} to our situation. For any diminishing step size rule $\{s_t\}_{t=0}^{+\infty}$ and any vector $\alpha^{t} \in \Delta_{m-1}$ of coefficients, the update rule becomes
\begin{equation}
\label{updaterule}
\alpha^{t+1}= p[\alpha^{t} - s_t u^{t}] = (\alpha^{t} - s_t u^{t} + \lambda \boldsymbol{1})_+
\end{equation}
where $\lambda \in \mathbb{R}$ is the normalizing constant such that
$$\sum_{i=1}^m{(\alpha_i^t - s_t u_i^t + \lambda)_+} =1.$$
Suppose now that $\alpha^t \in \stackrel{\circ}{\Delta}_{m-1}$ and that the step size $s_t$ is sufficiently small to guarantee
\begin{equation}
\label{sssmall} \alpha_i^t - s_t u_i^t + \lambda > 0 \quad \mbox{ for each } i \in M.
\end{equation} 
Here $\lambda$ has to be chosen so that
$$\sum_{i=1}^m{(\alpha_i^t - s_t u_i^t + \lambda)} =1, $$ i.e.,
$$\sum_{i=1}^m{\alpha_i^t} - s_t \sum_{i=1}^m{u_i^t} + m\lambda = 1,$$ hence
$$ \lambda = s_t \bar{u}^t, $$
where $\bar{u}^t= \frac{\sum_{i=1}^m{u_i^t}}{m}$ is the average of
the subgradient vector components.
In what follows we will make sure to choose a diminshing step size rule small enough so that \eqref{sssmall} is verified, or, equivalently,
\begin{equation}
\label{newdssr}
\alpha_i^t - s_t(u^t_t - \bar{u}^t) >0 \mbox{ for all } t \in \mathbb{N} \quad \mbox{ and for all }  i = 1, \ldots, m.
\end{equation}

We are now able to state the first convergence result.
\begin{prop}\label{propsprime}
Suppose (CD) and (MAC) holds. Let $\Delta^*_{m-1}$ be the bounded set of minimum points for $g$ and let $\{s_t\}_{t=0}^{+ \infty}$ be a diminishing step size rule.
Then, there exists another diminishing step size rule $s^{'}_t \leq s_t$ which satisfies \eqref{newdssr}.
Consequently, given $\alpha^0 \in \Delta_{m-1}$ and the recursive sequence
\begin{align}
u^t &= PVV(\alpha^t) \notag \\
\alpha^{t+1} &= \alpha^t - s'_t(u^t - \bar{u}^t) \label{recalpha}
\end{align}
either $\alpha^{t^*} \in \Delta^*_{m-1}$ for some $t^* \in \mathbb{N}$, or
$$\lim_{t \to + \infty} \alpha^t = \alpha^* \in \Delta^*_{m-1} \qquad \mbox{and} \qquad
\lim_{t \to + \infty} g(\alpha^t) = g(\alpha^*) = v(\Gamma,w).$$
\end{prop}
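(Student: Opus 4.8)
The plan is to recognise the recursion \eqref{recalpha} as a genuine projected subgradient iteration for $g$ on the simplex and then invoke Proposition~\ref{propsgconv}. Three things have to be arranged: that the hypotheses of Proposition~\ref{propsgconv} hold for $g$; that $\Delta^*_{m-1}$ lies strictly inside $\Delta_{m-1}$, so the iteration cannot be driven onto the boundary; and that a step size rule $s'_t\le s_t$ can be produced that keeps every iterate in $\stackrel{\circ}{\Delta}_{m-1}$ — so that the truncation $(\cdot)_+$ in \eqref{updaterule} never activates and \eqref{recalpha} really coincides with \eqref{updaterule}/\eqref{projection} — while still having $\sum_t s'_t=+\infty$. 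The first point is almost immediate: $g$ is convex with $\min_{\Delta_{m-1}}g=v(\Gamma,w)$ (see \cite{d02}), hence continuous on the compact set $\Delta_{m-1}$, so $\Delta^*_{m-1}=\argmin_{\Delta_{m-1}}g$ is a non-empty compact convex set; the computation displayed just before the statement shows $u^\alpha\in\partial_\alpha g(\alpha)$, and $0\le u^\alpha_j=\mu_j^w(B_j^\alpha)\le\mu_j^w(C)$ gives the uniform bound $\|u^\alpha-\bar{u}^\alpha\boldsymbol{1}\|_\infty\le G':=\max_j\mu_j^w(C)$.

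For the interiority I would show that (MAC) forces $\Delta^*_{m-1}\subset\stackrel{\circ}{\Delta}_{m-1}$. On the facet $\{\alpha\in\Delta_{m-1}:\alpha_k=0\}$ the $k$-th term of $\max_{j\in M}\alpha_jf_j^w$ drops out (all terms are non-negative), so $g$ restricted to that facet is exactly the $g$-function of the $(m-1)$-coalition problem with coalition $k$ deleted; hence the minimum of $g$ over the facet equals the maxmin value $\tilde v_k$ of that subproblem. Now let $(B_j^*)_{j\in M}$ be optimal for \eqref{v_maxmin}. Since the maxmin value is strictly positive under (MAC) (cf.\ \cite{ds61}), $\mu_k^w(B_k^*)\ge v(\Gamma,w)>0$; by (MAC) the set $B_k^*$ is then not $\mu_j^w$-null for any $j\ne k$, and by (CD) it can be split into pieces $(D_j)_{j\ne k}$ with $\mu_j^w(D_j)>0$ for each $j\ne k$. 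The $(m-1)$-partition $(B_j^*\cup D_j)_{j\ne k}$ of $C$ gives every remaining coalition value $\mu_j^w(B_j^*)+\mu_j^w(D_j)>v(\Gamma,w)$, so $\tilde v_k>v(\Gamma,w)$; thus no minimiser of $g$ sits on a facet, and by compactness $\varepsilon_0:=\min\{\alpha_i^*:\alpha^*\in\Delta^*_{m-1},\,i\in M\}>0$.

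For the step size, starting from $\alpha^0\in\stackrel{\circ}{\Delta}_{m-1}$ I would set, recursively along the iteration, $s'_t:=\min\{s_t,\ \tfrac1{2G'}\min_{i}\alpha_i^t\}$, with $\alpha^{t+1}$ then produced by \eqref{recalpha}. Since $s'_t|u_i^t-\bar{u}^t|\le\tfrac12\alpha_i^t$, induction gives $\alpha_i^{t+1}\ge\tfrac12\alpha_i^t>0$ and $\sum_i\alpha_i^{t+1}=1$, so all iterates stay in $\stackrel{\circ}{\Delta}_{m-1}$; hence $\lambda=s'_t\bar{u}^t$ is admissible in \eqref{updaterule}, the truncation is inert, \eqref{recalpha} coincides with \eqref{projection}, and \eqref{newdssr} holds. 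As $0<s'_t\le s_t\to0$, the only thing left to check is $\sum_t s'_t=+\infty$, and I expect this to be the main obstacle. I would argue it by contradiction: if $\sum_t s'_t<+\infty$ then $\|\alpha^{t+1}-\alpha^t\|\le s'_t\sqrt m\,G'$ is summable, so $\alpha^t\to\alpha^\infty$; since $\sum_t s_t=+\infty$, the safeguard $\tfrac1{2G'}\min_i\alpha_i^t$ must beat $s_t$ infinitely often, so $\alpha^\infty$ lies on the boundary, say $K:=\{i:\alpha_i^\infty=0\}\ne\emptyset$. Fix $j\notin K$. For $i\in K$ the maxsum share obeys $B_i^t\subseteq\{x:f_i^w(x)/f_j^w(x)\ge\alpha_j^t/\alpha_i^t\}$ off a $v$-null set (by (MAC)), with $\alpha_j^t/\alpha_i^t\to+\infty$; as $f_i^w/f_j^w<+\infty$ $v$-a.e.\ (again (MAC)), $u_i^t=\mu_i^w(B_i^t)\to0$. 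On the other hand $\sum_j u_j^t=\int_C f_{\sigma_t(x)}^w(x)\,dv\ge\int_C\min_j f_j^w(x)\,dv=:c_0>0$ (where $\sigma_t(x)$ is the coalition $x$ is assigned to, and $\min_j f_j^w>0$ $v$-a.e.\ by (MAC)), so $\bar{u}^t\ge c_0/m$ for all $t$. Hence for $i\in K$ and $t$ large $u_i^t-\bar{u}^t<0$, and \eqref{recalpha} makes $\alpha_i^t$ eventually increasing — contradicting $\alpha_i^t\to0$. So $\sum_t s'_t=+\infty$, and $(s'_t)$ is a diminishing step size rule with $s'_t\le s_t$ satisfying \eqref{newdssr}.

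It then remains to read off the conclusion. Since \eqref{recalpha} is exactly the projected subgradient method for $g$ on $\Delta_{m-1}$ with the rule $(s'_t)$, Proposition~\ref{propsgconv} gives either finite termination with $\alpha^{t^*}\in\Delta^*_{m-1}$, or $f_{best}^t\to v(\Gamma,w)$. In the latter case, because no truncation ever occurs, the standard subgradient estimate $\|\alpha^{t+1}-\alpha^*\|^2\le\|\alpha^t-\alpha^*\|^2-2s'_t\bigl(g(\alpha^t)-v(\Gamma,w)\bigr)+(s'_t)^2\|u^t-\bar{u}^t\boldsymbol{1}\|^2$ holds for every $\alpha^*\in\Delta^*_{m-1}$; applying it with $\alpha^*$ the nearest point of $\Delta^*_{m-1}$ to $\alpha^t$, together with $g(\alpha^t)\ge v(\Gamma,w)$, the uniform continuity of $g$ and $\sum_t s'_t=+\infty$, yields $\mathrm{dist}(\alpha^t,\Delta^*_{m-1})\to0$ and hence $g(\alpha^t)\to v(\Gamma,w)$; the quasi-Fej\'er monotonicity of $t\mapsto\|\alpha^t-\alpha^*\|$ that the same estimate provides then pins the iterates to a single $\alpha^*\in\Delta^*_{m-1}$ (this last refinement is cleanest if the step sizes are also taken square-summable, or if the minimiser is unique, e.g.\ under (RD)).
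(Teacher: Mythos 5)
Your proof is correct and reaches the paper's conclusion, but the central step --- showing that the truncated rule $s'_t=\min\{s_t,\tau_t\}$ still satisfies $\sum_t s'_t=+\infty$ --- is argued by a genuinely different route. The paper supposes $\sum_t s'_t<+\infty$, extracts a subsequence along which $\alpha^{t}_{i^*}\to 0$ while the PVVs converge to a point $\tilde u$ on the Pareto surface of $\mathcal{P}$ with $\tilde u_{i^*}\ge 1/m>0$, and then uses (MAC) to redistribute the piece of cake generating $\tilde u_{i^*}$ among the other coalitions, strictly increasing the supported linear functional and contradicting the optimality of $\tilde u$ for the degenerate hyperplane with $\tilde\alpha_{i^*}=0$. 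You instead work at the level of densities: for a coordinate $i$ with $\alpha_i^t\to0$ you trap $B_i^t$ inside the level set $\{f_i^w/f_j^w\ge\alpha_j^t/\alpha_i^t\}$ and use (MAC) to get $u_i^t\to0$, while $\bar u^t\ge c_0/m>0$ uniformly; the recursion \eqref{recalpha} then makes $\alpha_i^t$ eventually increasing, incompatible with $\alpha_i^t\to0$. Both arguments hinge on (MAC), but yours exploits the dynamics of the iteration itself rather than the geometry of the supporting hyperplane, and it yields the additional (and useful) fact that the minimizers of $g$ are interior to the simplex, which the paper never establishes explicitly. Two small remarks: the simultaneous splitting of $B_k^*$ into $m-1$ pieces of positive measure for every remaining coalition deserves a one-line appeal to (CD) and the Dubins--Spanier machinery; and your caveat on the final claim $\lim_t\alpha^t=\alpha^*$ is well taken --- the paper asserts convergence of the iterates to a single minimizer without justification, whereas a merely diminishing, non--square-summable step only guarantees $\mathrm{dist}(\alpha^t,\Delta^*_{m-1})\to0$ and $g(\alpha^t)\to v(\Gamma,w)$ in general, so your more guarded statement (single-point convergence under square summability or uniqueness of the minimizer, e.g.\ under (RD)) is in fact more careful than the paper's.
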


\begin{proof}

First of all, notice that constraint \eqref{newdssr} involves only those indexes $i \in \{1, \ldots, m\}$ for which $u_i^t > \bar{u}^t.$ Let be $\mathcal{I}_t$ the set of those indexes in the step $t.$ For each of them we would get
$$s_t < \frac{\alpha_i^t}{u_i^t - \bar{u}^t} = \frac{m \alpha_i^t}{(m-1)u_i^t - \sum_{j \neq i}{u_j^t}}.$$
Now, fix an arbitrary integer $K \in \mathbb{N}$ and define
$$\tau_t = \frac{K-1}{K} \min_{i \in \mathcal{I}_t}{\left\{\frac{m \alpha_i^t}{(m-1)u_i^t - \sum_{j \neq i}{u_j^t}}\right\}}.$$
Hence, let us define
$$s^{'}_t= \min\{s_t, \tau_t \}.$$

Thus, $s^{'}_t$ satisfies \eqref{newdssr} and \eqref{dss1}, since $\lim_{t \to + \infty} s^{'}_t = \lim_{t \to + \infty} s_t=0.$ To show \eqref{dss2}, let us suppose $\sum_{t=0}^{+ \infty}{s^{'}_t} < + \infty.$
This implies for some $i^* \in \{1, \ldots, m\}$ and some sequence $\{t_r\} \subset \mathbb{N}$

$$ \lim_{t_r \to + \infty} \frac{m \alpha_{i^*}^{t_r}}{(m-1)u_{i^*}^{t_r} - \sum_{j \neq i^*}{u_j^{t_r}}} = 0.$$
Since $(m-1)u_{i^*}^{t_r} - \sum_{j \neq i^*}{u_j}^{t_r} > 0,$ taking a further subsequence $\{t_p\} \subset \{t_r\}$ we have
\begin{align}
&\alpha_{i^*}^{t_p} \rightarrow \tilde{\alpha}_{i^*} = 0, \text{ so } \sum_{j \neq i^*}{\alpha_j^{t_p}} \rightarrow \sum_{j \neq i^*}{\tilde{\alpha}_j} = 1, \text{ and} \notag \\
&u_j^{t_p} \rightarrow \tilde{u}_j \text{ for all } j \in \{1, \ldots, m\}, \notag \\
&\text{with } (m-1)\tilde{u}_{i^*} > \sum_{j \neq i^*} {\tilde{u}_j} \label{sum}.
\end{align}
By continuity, $\tilde{u}= (\tilde{u}_1, \ldots, \tilde{u}_m)$
lies on the upper surface of $\mathcal{P},$ so
\begin{equation}\label{uoptimal}
\sum_{j \in M}{\tilde{u}_j} \geq 1.
\end{equation}
Moreover, $\tilde{u}$ is supported by the hyperplane $\sum_{j=1}^m{\tilde{\alpha}_j x_j} = k.$

By (\ref{sum}) and (\ref{uoptimal}) we have that $\tilde{u}_{i^*} \geq \frac{1}{m} >0,$ since
\begin{align*}
(m-1)\tilde{u}_{i^*} &> \sum_{j \neq i^*} {\tilde{u}_j} = \sum_{j \in M}{\tilde{u}_j} - \tilde{u}_{i^*} \geq 1 - \tilde{u}_{i^*}, \text{ so} \\
\tilde{u}_{i^*} &\geq \frac{1}{m} > 0.
\end{align*}

Now, the coexistence of $\tilde{u}_{i^*} > 0$ and
$\tilde{\alpha}_{i^*} = 0$ clashes with the hypothesis (MAC). In
fact, $(\tilde{u}_1, \ldots, \tilde{u}_m) \in \argmax_{x \in
\mathcal{P}} \sum_{j \in M}{\tilde{\alpha}_j x_j},$ $\sum_{j \in
M}{\tilde{\alpha}_j \tilde{u}_j} = \sum_{j \neq
i^*}{\tilde{\alpha}_j \tilde{u}_j}= k$ and there is no $(x_1,
\ldots, x_m) \in \mathcal{P}$ for which $\sum_{j \in
M}{\tilde{\alpha}x_j}>k.$ Since $\tilde{u}_{i^*} \geq \frac{1}{m}
>0,$ there exists $\tilde{A}_{i^*}$ such that $\tilde{u}_{i^*} =
\mu_{i^*}(\tilde{A}_{i^*}) \geq \frac{1}{m}.$ By (MAC) we can derive
a partition from $\tilde{A}_{i^*}$ of $(m-1)$ subsets $\{B_j\}_{j
\neq {i^*}},$ with $\cup_{j \neq i^*} B_j = \tilde{A}_{i^*}$ and
$B_j \cap B_l = \emptyset$ if $j \neq l,$ such that $\mu_j (B_j)
\geq \varepsilon >0$ for all $j \neq i^*.$

If we consider the partition $\tilde{\tilde{A}}$ defined as
$\tilde{\tilde{A}}_{i^*} = \emptyset,$ $\tilde{\tilde{A}}_j =
\tilde{A}_j \cup B_j,$ we get
$$\sum_{j \in M}{\tilde{\alpha}_j} \mu_j(\tilde{\tilde{A}}_j) = \sum_{j \neq i^*}{\tilde{\alpha}_j(\mu_j(\tilde{A}_{i^*}) + \mu_j(B_j))} = k + (m-1)\varepsilon > k,$$
which is a contradiction.

The final statement is a direct consequence of Propositions \ref{propupper}, \ref{propsgconv} and of the fact that if $s'_t \to 0$ then the sequence $\{\alpha^t\}$ must converge to some $\alpha^* \in \Delta^*_{m-1}$. Moreover, by the dominated convergence theorem,  $g(\alpha^t) \to g(\alpha^*) = v(\Gamma,w)$, last equality being again a consequence of Proposition \ref{propupper}.
\qedhere
\end{proof}

To prove the convergence of the PVVs and of the lower bound, we assume \emph{relative disagreement} (RD).
\begin{prop} \label{convlb}
  Suppose (CD), (MAC) and (RD) hold. Then, for any $\alpha^0 \in \Delta_{m-1}$ and the recursive sequence \eqref{recalpha}, one of the following two conditions hold:
  \begin{itemize}
    \item either $u^{t^*}=(v(\Gamma,w),\ldots,v(\Gamma,w))$ for some $t^* \in \mathbb{N}$ and $\underline{v}(u^{t^*}) = v(\Gamma,w)$,
        \item or $\lim_{t \to +\infty} u^t =  (v(\Gamma,w),\ldots,v(\Gamma,w))$ and $\underline{v}(u^t) \to v(\Gamma,w)$.
  \end{itemize}
\end{prop}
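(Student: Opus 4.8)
The plan is to lift the convergence of $\{\alpha^t\}$ obtained in Proposition~\ref{propsprime} to the sequence $\{u^t\}$ of partition value vectors. The bridge is that assumption (RD) makes the PVV of an interior coefficient vector \emph{unique}, and that at a minimizer of $g$ this unique PVV is forced onto the egalitarian line $\ell$.

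First I would invoke Proposition~\ref{propsprime}: under (CD) and (MAC), the recursion \eqref{recalpha} either reaches $\alpha^{t^*}\in\Delta^*_{m-1}$ at some finite $t^*$, or $\alpha^t\to\alpha^*\in\Delta^*_{m-1}$ with $g(\alpha^t)\to v(\Gamma,w)$. Then I would observe that \emph{every} minimizer $\alpha^*$ of $g$ lies in $\stackrel{\circ}{\Delta}_{m-1}$: since $g(\alpha^*)=v(\Gamma,w)$, Proposition~\ref{propupper} says the hyperplane $\{x:\langle\alpha^*,x\rangle=v(\Gamma,w)\}$ supports $\mathcal P$ at $u^{\alpha^*}$; because $(v(\Gamma,w),\dots,v(\Gamma,w))\in\mathcal P$ and $\langle\alpha^*,(v(\Gamma,w),\dots,v(\Gamma,w))\rangle=v(\Gamma,w)$, the egalitarian point is itself maxsum-optimal for $\alpha^*$. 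If some $\alpha^*_{i^*}=0$, then, using $v(\Gamma,w)>0$ and (MAC) exactly as in the closing paragraph of the proof of Proposition~\ref{propsprime}, one splits coalition $i^*$'s share (which has positive measure) among the remaining coalitions so as to strictly increase $\sum_{j\neq i^*}\alpha^*_j\mu^{w}_j(\cdot)$, contradicting optimality; hence all $\alpha^*_i>0$.

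Next I would use (RD) to get uniqueness of the PVV at interior points. Fix $\alpha\in\stackrel{\circ}{\Delta}_{m-1}$ and $k\neq h$, and note that the tie set $\{x:\alpha_kf^{w}_k(x)=\alpha_hf^{w}_h(x)>0\}$ is contained in the finite union, over $i\in S_k$ and $j\in S_h$, of the sets $\{x:\alpha_kf_i(x)/w(S_k)=\alpha_hf_j(x)/w(S_h)>0\}$; on each such set $f_i$ and $f_j$ are proportional, hence $\mu_i$ and $\mu_j$ are mutually proportional there, and since $S_k\cap S_h=\emptyset$ we have $i\neq j$, so (RD) forces that set to be $v$-null. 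Thus for interior $\alpha$ the maxsum partition of \eqref{ppartition} is determined up to a $v$-null set, so $u^{\alpha}$ is the \emph{unique} maximizer of $\langle\alpha,\cdot\rangle$ over $\mathcal P$ (it lies in $\mathcal P$ and realizes $g(\alpha)=\max_{x\in\mathcal P}\langle\alpha,x\rangle$, and any other maximizer is the image of a maxsum partition, hence agrees with it a.e.). In particular, for a minimizer $\alpha^*$ the egalitarian point---being maxsum-optimal for $\alpha^*$ by the previous step---equals $u^{\alpha^*}$, i.e. $u^{\alpha^*}=(v(\Gamma,w),\dots,v(\Gamma,w))$.

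It then remains to close the two cases. If $\alpha^{t^*}\in\Delta^*_{m-1}$, then $u^{t^*}=u^{\alpha^{t^*}}=(v(\Gamma,w),\dots,v(\Gamma,w))$, and putting $u_h=u_j=v(\Gamma,w)$ in \eqref{lower_bound} gives $\underline v(u^{t^*})=v(\Gamma,w)$ at once. Otherwise $\alpha^t\to\alpha^*$; the sequence $\{u^t\}\subset\mathcal P$ is bounded, and if $u^{t_r}\to\bar u$ along a subsequence, then taking limits in $\langle\alpha^{t_r},u^{t_r}\rangle=g(\alpha^{t_r})\to v(\Gamma,w)$ yields $\langle\alpha^*,\bar u\rangle=v(\Gamma,w)$, while $\langle\alpha^{t_r},u^{t_r}\rangle\geq\langle\alpha^{t_r},x\rangle$ for all $x\in\mathcal P$ yields $\langle\alpha^*,\bar u\rangle\geq\langle\alpha^*,x\rangle$; thus $\bar u$ maximizes $\langle\alpha^*,\cdot\rangle$ over $\mathcal P$, so $\bar u=(v(\Gamma,w),\dots,v(\Gamma,w))$ by uniqueness. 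Every subsequential limit being this point, $u^t\to(v(\Gamma,w),\dots,v(\Gamma,w))$, and $\underline v(u^t)\to v(\Gamma,w)$ because the map $u\mapsto\underline v(u)$ of \eqref{lower_bound} is continuous at the egalitarian point (there $u_h\to v(\Gamma,w)$ and $u_h-u_j\to0$ whatever index attains the maximum, and $\mu^{w}_j(C)>0$). I expect the real obstacle to be the uniqueness argument---checking that (RD) genuinely voids all the tie sets and that the unique PVV at a minimizer is the egalitarian point rather than merely some point of its supporting hyperplane; once this is in place, passing from $\alpha^t\to\alpha^*$ to $u^t\to u^{\alpha^*}$ needs only the compactness/optimality argument above, and in particular no continuity of the (generally discontinuous) PVV map.
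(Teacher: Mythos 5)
Your proof is correct and follows the same skeleton as the paper's: split into the finite-hitting case and the limiting case, use (RD) to make the PVV of the limiting coefficient vector unique and equal to the egalitarian point, and use compactness of $\mathcal{P}$ to force every subsequential limit of $\{u^t\}$ onto that point. The difference is in what you prove versus what the paper cites or asserts. The paper outsources the key uniqueness statement to Barbanel's book (quoting it, somewhat loosely, as uniqueness of the supporting hyperplane at each Pareto point, though what is actually used is uniqueness of the touching point for each $\alpha$), and it simply asserts that the subsequential limit $\tilde u$ ``is a second PVV associated to $\alpha^*$'' --- a step that is not justified there, since the PVV map need not be continuous. You close both of these: the tie-set argument showing that (RD) annihilates the sets $\{\alpha_k f_k^w = \alpha_h f_h^w > 0\}$ gives a self-contained proof that the maximizer of $\langle\alpha,\cdot\rangle$ over $\mathcal{P}$ is unique for interior $\alpha$ (together with the interiority of minimizers via the (MAC) redistribution argument, which the paper never addresses), and your passage to the limit through the support function $g(\alpha^{t_r})=\langle\alpha^{t_r},u^{t_r}\rangle$ correctly shows that $\bar u$ is a maximizer for $\alpha^*$ without ever needing continuity of $\alpha\mapsto u^\alpha$. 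Your identification of the unique PVV at a minimizer with the egalitarian point (it lies in $\mathcal{P}$ and attains $g(\alpha^*)$ on the supporting hyperplane) is also spelled out where the paper only asserts it. In short: same route, but your version is self-contained and repairs the one genuinely unjustified step in the paper's argument.
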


\begin{proof}
By (RD) we have that for any point on Pareto border of $\mathcal{P}$ there  exists one and only one hyperplane touching $\mathcal{P}$ (see \cite{b05}).
By the conclusions of Proposition \ref{propsprime} either $\alpha^{t^*} \in \Delta^*_{m-1}$ or $\lim_{t \to + \infty} \alpha^t = \alpha^* \in \Delta^*_{m-1}.$

In the first case, there exists only one PVV $u^{t^*}$ corresponding to $\alpha^{t^*}.$ Since the hyperplane with coefficients vector $\alpha^{t^*}$ touches the partition range $\mathcal{P}$ in the point corresponding to the maxmin allocation, then $u^{t^*}$ must coincide with
\begin{equation}\label{PVVegal}
u^*=(v(\Gamma,w),\ldots,v(\Gamma,w)).
\end{equation}
Also $\underline{v}(u^*)=v(\Gamma,w).$ In fact, all the coordinates of $u^*$ are equal and, therefore, maximal. Without loss of generality we choose $u_1$ as  maximal, and
$$
\underline{v}(u^*) = \frac{u_1}{1 + \sum_{j \neq 1} \frac{u_1 - u_j}{\mu^w_j(C)}} = u_1 = v(\Gamma,w)
$$
the last equality holding by \eqref{PVVegal}.

In the second occurrence, suppose on the contrary that $\alpha^{t^*} \to \alpha^*$ while $u^t \nrightarrow u^*$. Since the sequence $\{u^t\}$ is in a compact set, there must be a converging subsequence $u^{t'} \to \tilde{u} \neq u^*$. The vector $\tilde{u}$ is a second PVV associated to $\alpha^*$, but this is ruled out by (RD). Thus, $\lim_{t \to +\infty} u^t = u^*,$ and, by continuity, $\lim_{t \to + \infty} \underbar{v}(u^t) = \underbar{v}(u^*) = u^*$.
\qedhere
\end{proof}

\subsection{The algorithm}
We now present two versions of an algorithm for the maxmin division problem. The common initializing elements for both versions are listed in Table \ref{algelement}. The first version computes upper and lower bounds for $v(\Gamma,w)$ and updates the coefficient vector $\alpha$ through the subgradient rule \eqref{updaterule}. Both bounds are updated by means of a simple comparison with the old ones. The generic step is described in Table \ref{algstep}. A simpler but slower version, described in Table \ref{alg2step}, computes the approximating optimal partition as well as the value. The finiteness of both algorithms is guaranteed by Propositions 
\ref{propsgconv}, \ref{propsprime} and \ref{convlb}.

Particular care is needed in choosing the step sequence $\{s_t\}_{t=0}^{+ \infty}$. A sequence converging too fast to 0 may lead to an increase in the  number of steps needed, since the step may soon become too small to reach the optimum. Similarly a sequence converging too slowly may result in values of $\alpha$ (and of the corresponding PVV's) jumping from one extreme to the other of the unit simplex (and of the partition range for the PVV's). This, again, will slow the convergence process.

\begin{table}[h]
\caption{Description and initialization algorithms elements.}
\label{algelement}
\centering
	\begin{tabular}{cll}
		\toprule
		Elements & Description & Initialization\\
		\midrule
		$\alpha$ & supporting hyperplane coefficients vector & $\alpha_j^0={1}/{m}$, $j \in M$ \\
		$u$ & PVV vector associated to $\alpha$ & $u^0 = u^{\alpha^0}$ \\
		$ub$ & upper bound & $g(\alpha^0)$\\
		$lb$ & lower bound & $\underline{v}(u^0)$\\
		$s_t^{'}$ & diminishing step size rule & $s_0^{'} = \min\{s_0, \tau_0\}$ \\
		\bottomrule	
	\end{tabular}
\end{table}

\begin{table}[h]
\caption{Generic step $t$ for algorithm returning $v(\Gamma, w)$.}
\label{algstep}
\centering
	\begin{tabular}{lp{0.75\columnwidth}}
		\toprule
		Step $t$ & Computation \\
		\midrule
		update $\alpha$ & $\alpha^t = \alpha^{t-1} - s^{'}_t(u^{t-1} - \bar{u}^{t-1})$ \\
		update $u$ & $u^{t}=PVV(\alpha^t)$ \\
		update $ub$ & if $g(\alpha^t) < ub,$ then $ub= g(\alpha^t)$ \\
		update $lb$ & if $\underline{v}(u^t) > lb,$ then $lb = \underline{v}(u^t)$\\
		stop condition & if $ub - lb < \varepsilon,$ then STOP. Else, repeat the step. \\
	\bottomrule	
	\end{tabular}
\end{table}

\begin{table}[h]
\caption{Generic step $t$ for algorithm returning $u^*$.}
\label{alg2step}
\centering
	\begin{tabular}{lp{0.75\columnwidth}}
		\toprule
		Step $t$ & Computation \\
		\midrule
		update $\alpha$ & $\alpha^t = \alpha^{t-1} - s^{'}_t(u^{t-1} - \bar{u}^{t-1})$ \\
		update $u$ & $u^{t}=PVV(\alpha^t)$ \\
		stop condition & if $\max_{j \in M}{u_j^{t}} - \min_{j \in M}{u_j^{t}} < \varepsilon,$ then STOP. Else, repeat the step. \\
	\bottomrule	
	\end{tabular}
\end{table}

\subsection{A five players example}
Let us consider the coalitional game defined in \eqref{coal_game},
with five players and players' preferences listed as probability distributions on $C=[0,1]$ in Table \ref{distributions}.

\begin{table}[ht]
\caption{Players preferences.}
\label{distributions}
\centering
	\begin{tabular}{cl}
		\toprule
	Player $\{i\}$ & $\mu_i$ \\
		\midrule
  1 & $\mbox{Beta} (2,5)$ \\
  2 & $\mbox{Beta} (3,8)$ \\
  3 & $\mbox{Beta} (7,2)$ \\
  4 & $\mbox{Beta} (10,10)$ \\
  5 & $\mbox{Uniform} [0,1]$ \\
	\bottomrule	
	\end{tabular}
\end{table}

In Figure \ref{densitiespartition}, we represent the initial densities (a) and then the maxmin partition for the fully competitive context (b), where $\Gamma = \{\{i\}_{i \in N}\}$ and $w(\{i\})=1,$ for all $i \in N.$

\begin{figure}[h]
\centering
\subfloat[][\emph{Densities of preferences}.]
{\includegraphics[width=0.45\columnwidth]{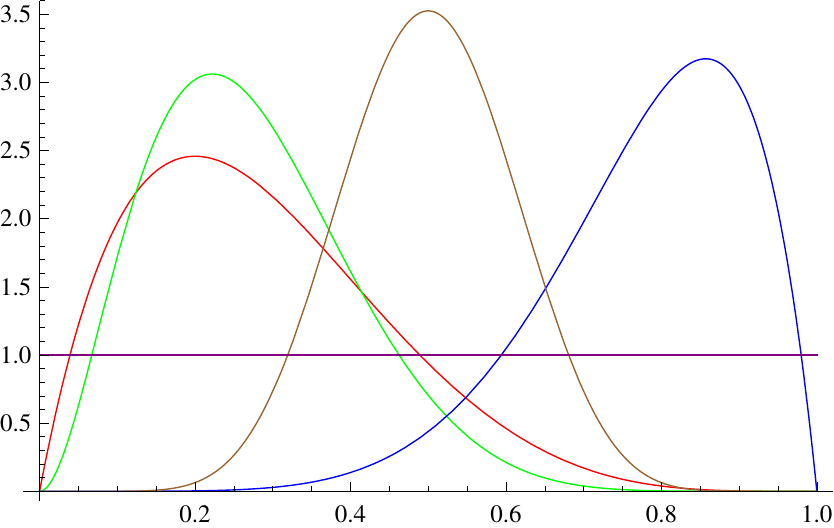}} \quad
\subfloat[][\emph{Maxmin partition}.]
{\includegraphics[width=0.45\columnwidth]{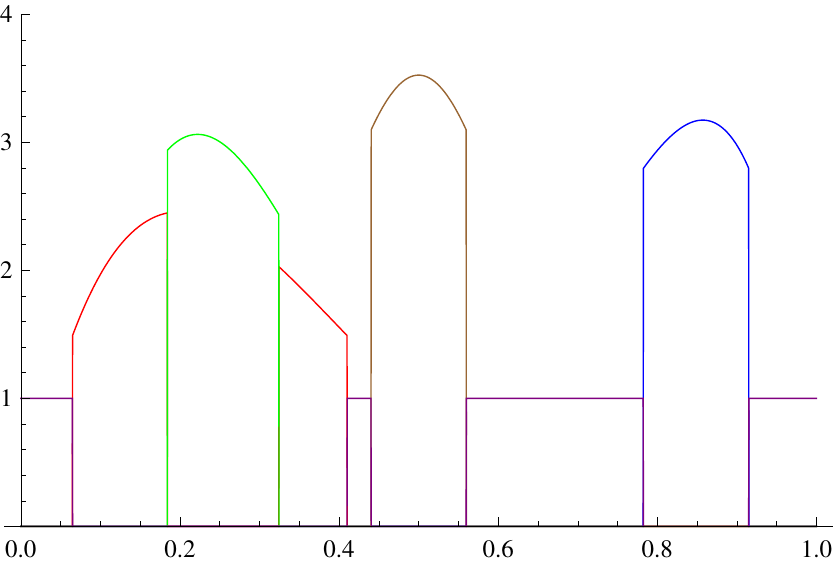}} \\
\caption{Densities of players preferences and maxmin partition in competitive context (red for player 1, green for player 2, blue for player 3, brown for player 4, purple for player 5).}\label{densitiespartition}
\end{figure}

For any $S \subseteq N$, we run our algorithm enforcing the two
weight systems $w_{card}$ and $w_{pre}$,
with a tolerance of $10^{-3}$ and we compute the corresponding game values (Table \ref{game_value}).
Consequently, in Table \ref{shapley_value}, we compute the Shapley value for each game.

\begin{table}[!ht]\footnotesize
\caption{Comparison between the coalitional game values.} 
\label{game_value}
\centering
\[	
	\begin{array}{lcc}
		\toprule
		\text{Coalition } S & \eta(S,w_{card}) & \eta(S,w_{pre}) \\
		\midrule
		\{i\}, i=1, 2, 3, 4, 5 & 0,404 & 0,404 \\
		\{1,2\} & 0,822 & 0,842 \\
    	\{1,3\} & 0,835 & 0,836 \\
		\{1,4\} & 0,844 & 0,861 \\
  		\{1,5\} & 0,819 & 0,827 \\
  		\{2,3\} & 0,820 & 0,820 \\
  		\{2,4\} & 0,826 & 0,826 \\
  		\{2,5\} & 0,828 & 0,833 \\
  		\{3,4\} & 0,808 & 0,808 \\
  		\{3,5\} & 0,926 & 1,040 \\
  		\{4,5\} & 0,886 & 1,004 \\
  		\{1,2,3\} & 1,262 & 1,280 \\
  		\{1,2,4\} & 1,273 & 1,302 \\
  		\{1,2,5\} & 1,256 & 1,265 \\
  		\{1,3,4\} & 1,275 & 1,289 \\
  		\{1,3,5\} & 1,392 & 1,465 \\
  		\{1,4,5\} & 1,366 & 1,427 \\
  		\{2,3,4\} & 1,242 & 1,241 \\
  		\{2,3,5\} & 1,389 & 1,474 \\
  		\{2,4,5\} & 1,349 & 1,414 \\
  		\{3,4,5\} & 1,403 & 1,625 \\
  		\{1,2,3,4\} & 1,706 & 1,727 \\
  		\{1,2,3,5\} & 1,877 & 1,903 \\
  		\{1,2,4,5\} & 1,841 & 1,862 \\
  		\{1,3,4,5\} & 1,968 & 2,044 \\
  		\{2,3,4,5\} & 1,940 & 2,032 \\
    	\{1,2,3,4,5\} & 2,477 & 2,477 \\
	\bottomrule	
	\end{array}
\]
\end{table}

\begin{table}[!h]\small
\caption{Comparison between the Shapley values.} 
\label{shapley_value}
\centering
\[
	\begin{array}{ccc}
		\toprule
	\text{Player} \{i\} & \text{S.V. in } \eta(S,w_{card}) &  \text{S.V. in } \eta(S,w_{pre}) \\
		\midrule
  1 & 0,465 & 0,436 \\
  2 & 0,451 & 0,425 \\
  3 & 0,507 & 0,519 \\
  4 & 0,491 & 0,502 \\
  5 & 0,563 & 0,594 \\
	\bottomrule	
	\end{array}
\]
\end{table}

The two games share the same ranking for the Shapley values
$$
\mbox{Pl.5} \succ \mbox{Pl.3} \succ \mbox{Pl.4} \succ \mbox{Pl.1} \succ \mbox{Pl.2}
$$
which therefore seems to be robust enough to the choice of system weights.

Also, the weight system $w_{pre}$ amplifies the difference in the Shapley values obtained with $w_{card}$, yielding a higher variance for the values' distributions.

\section{Concluding remarks}

In the previous section we described a couple of algorithms that return maxmin values and partitions in both competitive and cooperative settings. It is important to note that we could think of the same procedures as interactively implemented between (coalitions of) players and an impartial referee. At first the referee proposes a division of the cake based on the maxsum division of the cake with equal weights for all players. The players now report their utilities and the referee corrects the inequalities in the division by proposing a new maxsum division with modified weights: Players who were better off will be given a smaller weight and those who were worst off will see their weight increase. Of course, one cannot hope to achieve the same degree of precision, since the algorithm performs that step dozens of times, but the bounds described in Section 3 give  a precise idea on how far the proposed division is from the desired one.

Many issues remain open. We hint at two of them.
\begin{itemize}
\item In the numerical example it would be interesting to link the Shapley value rankings to the original system of preferences. What makes Players 5 and 3 the most powerful players in the cooperative division process? Apparently the two utility functions have different features: Player 5's distribution is uniform over the unit interval and his density is maximal only at the very ends of the interval. On the other hand, Player 3's preferences are concentrated at the second half of the interval -- where he has no competitors, except player 5 (who, however, has a smaller density). No simple explanation could be provided so far.
\item Beyond the convergence of the algorithms, which end in a finite number of steps, returning the approximate solution up to a specified degree of precision, it would be interesting to investigate about the computational efficiency of the same algorithms

\end{itemize}

\end{document}